\newcommand{\ol}{\mathcal{O}}
\def \a{\alpha}
\def \e{\eta}
\def \ep{\varepsilon}
\def \phi{\varphi}
\def \Phi{\varPhi}
\def \p{\pi}
\def \r{\rho}
\def \C{\mathbb{C}\,}
\def\widecheckg{g^{\hspace*{-2.5pt}\vbox to 5pt{\hbox to
0pt{\LARGE$\check{}$}}}\hspace*{2pt}}
\def\widecheckl{\lambda^{\hspace*{-3.5pt}\vbox to 8pt{\hbox to
0pt{\LARGE$\check{}$}}}\hspace*{2pt}}
\begin{document}

\title{Twistor theory for exceptional holonomy}
\author{Radu Pantilie}  
\address{R.~Pantilie, Institutul de Matematic\u a ``Simion~Stoilow'' al Academiei Rom\^ane,
C.P. 1-764, 014700, Bucure\c sti, Rom\^ania} 
\email{\href{mailto:Radu.Pantilie@imar.ro}{Radu.Pantilie@imar.ro}} 
\subjclass[2010]{53C28, 53C29, 53C25} 
\keywords{exceptional holonomy, twistor theory}

\newtheorem{thm}{Theorem}[section]
\newtheorem{lem}[thm]{Lemma}
\newtheorem{cor}[thm]{Corollary}
\newtheorem{prop}[thm]{Proposition}

\theoremstyle{definition}

\newtheorem{defn}[thm]{Definition}
\newtheorem{rem}[thm]{Remark}
\newtheorem{exm}[thm]{Example}

\numberwithin{equation}{section}

\thispagestyle{empty}

\begin{abstract}
We show that the $G_2$-manifolds and certain ${\rm Spin}(7)$-manifolds are endowed with natural Riemannian twistorial structures. 
Along the way, the exceptional ho\-lo\-no\-my representations are reviewed and other related facts are considered.
\end{abstract} 

\maketitle 

\section*{Introduction}  

\indent 
It is well-known that the Weyl curvature tensor is involved in the obstruction to integrability of almost twistorial structures. 
This is due to the fact that, pointwisely, it corresponds to a holomorphic section of a line bundle over the Grassmannian of isotropic 
$2$-dimensional subspaces of the model Euclidean space; this already shows why the dimension $n\geq4$\,. Alternatively, if $n\geq5$\,, 
that line bundle can be described as the square of the very ample generator of the closed adjoint orbit of ${\rm SO}(n,\C\!)$ on 
the projectivisation of its Lie algebra (if $n=4$\,, that Grassmannian is the disjoint union $\C\!P^1\sqcup\C\!P^1$ and the line bundle 
is induced by the line bundle of Chern number $4$ over the Riemann sphere).\\ 
\indent 
In the joint work \cite{DesLouPan}\,, we introduced the notion of `Riemannian twistorial structure' as the necessary 
augmentation that makes the Riemannian manifolds the objects of a category. Furthermore, we have shown that such structures 
can be found on any simply-connected Riemannian symmetric space. In this note, we carry on this programme by showing that, essentially,  
this also holds for any $G_2$-manifold and for any ${\rm Spin}(7)$-manifold whose holonomy group is contained by $G_2$ 
(Theorem \ref{thm:eholon_G2} and Corollary \ref{cor:eholon_G2}\,, below). 
This is, obviously, related to the fact that, in the former case, for example, the Riemannian curvature tensor 
of such a manifold corresponds, at each point, to a holomorphic section 
of a line bundle over the closed adjoint orbit on the projectivisation of the complexified Lie algebra of $G_2$ 
(see \cite{Bry-1987}\,, \cite{Sal-holo_book}\,). Pointwisely, our twistorial structures are given by equivariant holomorphic embeddings  
of the $5$-dimensional nondegenerate complex hyperquadric into the Grassmannians of isotropic subspaces of dimensions $3$ and $4$ of 
the corresponding complexified irreducible representation spaces of $G_2$ and ${\rm Spin}(7)$\,, respectively 
(Proposition \ref{prop:e_Euclid_twist_str}\,). 
Along the way, we discuss other facts related to the exceptional holonomy representations, such as, in 
Section~\ref{section:simple_sym_decomp}\,, a special kind of symmetric decompositions of Lie algebras (compare~\cite{Sal-holo_book}\,).

\section{Simple symmetric decompositions of Lie algebras} \label{section:simple_sym_decomp} 

\indent 
Unless otherwise stated, all the vector spaces (and Lie algebras) are complex; for example, $\mathfrak{sl}(2)=\mathfrak{sl}(2,\C\!)$\,.   
We denote by $U_n$ the irreducible (complex) representation space of $\mathfrak{sl}(2)$ of degree $n\in\mathbb{N}$\,.\\ 
\indent 
A classical way to build Lie algebras goes as follows. Let $\mathfrak{h}$ be a Lie algebra and let $\mathfrak{p}$ be a vector space, 
endowed with a representation $\a$ of $\mathfrak{h}$ on $\mathfrak{p}$\,, and an $\a$-invariant $\mathfrak{h}$-valued $2$-form $\e$ 
on $\mathfrak{p}$\,. Then there exists a unique antisymmetric bilinear form $[\cdot,\cdot]$ on $\mathfrak{h}\oplus\mathfrak{p}$ 
that induces the given Lie bracket on $\mathfrak{h}$ and such that 
$[h,p]=\a(h)(p)$\,, $[p_1,p_2]=\e(p_1,p_2)$\,, for any $h\in\mathfrak{h}$ and $p,p_1,p_2\in\mathfrak{p}$\,.\\ 
\indent 
The \emph{curvature form} of $\mathfrak{h}\oplus\mathfrak{p}$ is the tensor of degree $(1,3)$ on $\mathfrak{p}$ 
given by $R=\a\circ\e$\,. 
Then $[\cdot,\cdot]$ satisfies the Jacobi identity if and only if $R$ satisfies the first Bianchi identity 
$R\wedge{\rm Id}_{\mathfrak{p}}=0$\,. If this holds then the fairly standard terminology is that $\mathfrak{h}\oplus\mathfrak{p}$ 
is a \emph{symmetric decomposition}.\\ 
\indent 
We are interested in a special type of symmetric decompositions which are built as follows (see \cite{Sal-holo_book}\,), 
where $\p_n$ and $\ep_n$ are the $\mathfrak{sl}(2)$-invariant projections from $U_n\otimes U_n$ onto $U_2$ and $U_0$\,, 
respectively, $(n\in\mathbb{N}\setminus\{0\})$\,. 

\begin{prop} \label{prop:simple_sym_decomp} 
Let $\mathfrak{g}=\mathfrak{h}\oplus\mathfrak{p}$ be a symmetric decomposition such that there exist 
$k\in\mathbb{N}\setminus\{0\}$ and $n_1,\ldots,n_k\in\mathbb{N}\setminus\{0\}$\,, with $n_1+\cdots+n_k$ even, satisfying 
the following conditions:\\ 
\indent 
\quad{\rm (i)} $\mathfrak{h}$ is the Lie algebras direct sum of $k$ copies of $\mathfrak{sl}(2)$\,,\\ 
\indent 
\quad{\rm (ii)} $\mathfrak{p}=U_{n_1}\otimes\cdots\otimes U_{n_k}$ and $\a$ is the corresponding irreducible representation 
of $\mathfrak{h}$\,,\\ 
\indent 
\quad{\rm (iii)} $\e=\sum_{j=1}^{j=k}\ep_{n_1}\otimes\cdots\otimes\ep_{n_{j-1}}\otimes\p_{n_j}\otimes\ep_{n_{j+1}}\cdots\otimes\ep_{n_k}$\,.\\  
\indent 
Then $\mathfrak{g}$ is simple unless $\mathfrak{g}=\mathfrak{so}(4)$\,, $k=1$ and $n_1=2$\,.
\end{prop}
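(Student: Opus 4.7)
\emph{Proof plan.} The plan is to exploit the $\mathbb{Z}/2$-grading $\mathfrak{g}=\mathfrak{h}\oplus\mathfrak{p}$, which by (i)--(iii) is respected by the bracket; hence the involution $\sigma\colon\mathfrak{g}\to\mathfrak{g}$ acting as $+1$ on $\mathfrak{h}$ and $-1$ on $\mathfrak{p}$ is a Lie algebra automorphism. For any nonzero ideal $\mathfrak{i}\subseteq\mathfrak{g}$, both $\mathfrak{i}+\sigma(\mathfrak{i})$ and $\mathfrak{i}\cap\sigma(\mathfrak{i})$ are $\sigma$-invariant ideals, so it suffices to (a) show that $\mathfrak{g}$ admits no proper nonzero $\sigma$-invariant ideal, and (b) rule out the remaining possibility $\mathfrak{g}=\mathfrak{i}\oplus\sigma(\mathfrak{i})$ with $\mathfrak{i}$ not $\sigma$-invariant, which will turn out to be precisely the exceptional case.

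For (a), any $\sigma$-invariant subspace $\mathfrak{j}$ splits as $(\mathfrak{j}\cap\mathfrak{h})\oplus(\mathfrak{j}\cap\mathfrak{p})$ via the $\sigma$-eigenspace decomposition. The $\mathfrak{h}$-submodule $\mathfrak{j}\cap\mathfrak{p}$ is $0$ or $\mathfrak{p}$ by the irreducibility in (ii). If it is $0$, then $\mathfrak{j}\subseteq\mathfrak{h}$, and the ideal condition $\alpha(\mathfrak{j})(\mathfrak{p})\subseteq\mathfrak{j}\cap\mathfrak{p}=0$ combined with the fact that every $\mathfrak{sl}(2)$-summand of $\mathfrak{h}$ acts nontrivially on $\mathfrak{p}$ (since all $n_j\geq 1$) forces $\mathfrak{j}=0$. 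If $\mathfrak{j}\cap\mathfrak{p}=\mathfrak{p}$, then $\mathfrak{j}\supseteq[\mathfrak{p},\mathfrak{p}]=\operatorname{image}(\varepsilon)$; evaluating the $j$-th summand of $\varepsilon$ on decomposable tensors $v_1\otimes\cdots\otimes v_k$ and $w_1\otimes\cdots\otimes w_k$ with $\pi_{n_j}(v_j,w_j)\neq 0$ and $\varepsilon_{n_i}(v_i,w_i)\neq 0$ for $i\neq j$ (possible since $\pi_n$ and $\varepsilon_n$ are nonzero for every $n\geq 1$) produces a nonzero vector in the $j$-th $\mathfrak{sl}(2)$-factor. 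By $\mathfrak{h}$-invariance this image is all of that factor, so $\operatorname{image}(\varepsilon)=\mathfrak{h}$ and $\mathfrak{j}=\mathfrak{g}$.

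For (b), the kernels $\mathfrak{i}\cap\mathfrak{p}$ and $\mathfrak{i}\cap\mathfrak{h}$ of the two coordinate projections are $\mathfrak{h}$-invariant, and arguing exactly as in (a) they must both vanish (otherwise $\mathfrak{i}$ would contain the whole of $\mathfrak{p}$ or a nonzero $\mathfrak{sl}(2)$-summand and, by the first step, coincide with $\mathfrak{g}$, contradicting $\mathfrak{i}\cap\sigma(\mathfrak{i})=0$). Hence both projections are $\mathfrak{h}$-equivariant isomorphisms, and their composition yields an isomorphism of $\mathfrak{h}$-modules $\mathfrak{h}\cong\mathfrak{p}$, where $\mathfrak{h}$ acts on itself by the adjoint representation. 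But $\mathfrak{p}$ is irreducible, while the adjoint representation of $\mathfrak{h}=\mathfrak{sl}(2)^{\oplus k}$ decomposes as $k$ irreducibles, forcing $k=1$; then $\mathfrak{p}\cong\mathfrak{sl}(2)\cong U_2$ forces $n_1=2$, which is the stated exception (and indeed $\mathfrak{g}\cong\mathfrak{sl}(2)\oplus\mathfrak{sl}(2)\cong\mathfrak{so}(4)$ does split this way, via the diagonal and anti-diagonal embeddings). The crux of the argument is precisely this last step: ruling out the `graph-type' ideals mixing $\mathfrak{h}$ and $\mathfrak{p}$ is where the tension between the irreducibility of $\mathfrak{p}$ and the $k$-fold decomposition of the adjoint $\mathfrak{h}$-module isolates the one exception.
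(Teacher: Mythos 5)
Your proof is correct, but it takes a genuinely different route from the paper's. The paper first shows $\mathfrak{g}$ is reductive (a nonzero element of $\Lambda^2U_1$ induces an ad-invariant inner product on $\mathfrak{h}\oplus\mathfrak{p}$), then that its centre vanishes (using $\mathfrak{h}$ reductive, $\a$ faithful, $\e$ surjective), hence $\mathfrak{g}$ is semisimple, and finally argues that $\mathfrak{p}$\,, being irreducible and generating $\mathfrak{g}$\,, must lie in a single ideal equal to $\mathfrak{g}$\,. You avoid both the invariant form and semisimplicity, working directly with an arbitrary nonzero ideal $\mathfrak{i}$ and the grading involution $\sigma$: part (a) of your argument (no proper nonzero $\sigma$-invariant ideal) is the elementary core shared with the paper, but your part (b) is a genuine addition --- the paper's assertion that an irreducible $\mathfrak{p}$ ``must be contained by just one ideal'' is exactly what fails for a graph-type ideal, and for $(2)=\mathfrak{so}(4)$ the subspace $\mathfrak{p}$ does sit anti-diagonally across the two simple factors. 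So your analysis of the non-$\sigma$-invariant ideals, which forces the $\mathfrak{h}$-module isomorphism $\mathfrak{h}\cong\mathfrak{p}$ and hence $k=1$\,, $n_1=2$\,, is precisely what locates the stated exception; the paper's shorter argument leans on structure theory and leaves this case implicit. Two small points you should make explicit: the surjectivity of the two coordinate projections of $\mathfrak{i}$ follows from the dimension count $2\dim\mathfrak{i}=\dim\mathfrak{g}=\dim\mathfrak{h}+\dim\mathfrak{p}$ combined with their injectivity, and in showing ${\rm image}(\e)$ spans $\mathfrak{h}$ you should note that this span is an $\mathfrak{h}$-submodule of $\bigoplus_j\mathfrak{sl}(2)^j$ whose summands are pairwise non-isomorphic $\mathfrak{h}$-modules, so that having nonzero component in each factor forces it to be all of $\mathfrak{h}$\,. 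Neither is a real gap.
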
 
\begin{proof}
Any nonzero element of $\Lambda^2U_1$ induces an ad-invariant Euclidean structure on $\mathfrak{g}$ and, consequently, 
$\mathfrak{g}$ is reductive. Furthermore, $\mathfrak{h}$ reductive, $\a$ faithfull and $\e$ surjective quickly implies that 
the center of $\mathfrak{g}$ is zero, and, thus, $\mathfrak{g}$ is semisimple.\\ 
\indent 
Now, we look at $\mathfrak{g}$ as an $\mathfrak{h}$-representation space, induced by its adjoint representation. 
Similarly, $\mathfrak{p}$ and the ideals of $\mathfrak{g}$ are such representation spaces. As $\mathfrak{p}$ is irreducible, 
it must be contained by just one ideal of $\mathfrak{g}$ and, as $\mathfrak{p}$ generates $\mathfrak{g}$\,, that ideal must be equal 
to $\mathfrak{g}$\,. 
\end{proof}

\indent 
A Lie algebra $\mathfrak{g}$ as in Proposition \ref{prop:simple_sym_decomp} is said to be endowed with a \emph{simple decomposition}. 
Next, we concentrate on the problem of finding which Lie algebras admit simple decompositions.\\ 
\indent  
For this, let $k\in\mathbb{N}\setminus\{0\}$ and $n_1,\ldots,n_k\in\mathbb{N}\setminus\{0\}$\,, 
with $n_1+\cdots+n_k$ even. Then, with $\mathfrak{h}$ and $\mathfrak{p}$ as in (i) and (ii) of Proposition \ref{prop:simple_sym_decomp}\,, 
we denote by the symmetric multi-index $(n_1\ldots n_k)$ the vector space $\mathfrak{h}\oplus\mathfrak{p}$ endowed with the antisymmetric 
bilinear form $[\cdot,\cdot]$\,, given by $\a$ and $\e$ of (ii) and (iii) from Proposition \ref{prop:simple_sym_decomp}\,. 

\begin{thm} \label{thm:simple_sym_decomp} 
The Lie algebras admitting simple decompositions are the following: 
\begin{equation*} 
\begin{split} 
\mathfrak{so}(4)&=(2)\\ 
\mathfrak{so}(5)&=(1.1)\\ 
\mathfrak{sl}(3)&=(4)\\ 
\mathfrak{g}_2&=(3.1)\\ 
\mathfrak{so}(6)&=(2.2)\\ 
\mathfrak{so}(7)&=(2.1.1)\\ 
\mathfrak{so}(8)&=(1.1.1.1)  
\end{split} 
\end{equation*} 
where $\mathfrak{g}_2$ is the simple Lie algebra of dimension $14$\,. 
\end{thm}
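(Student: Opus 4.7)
My plan is to split the classification into existence and uniqueness parts and identify each admissible tuple $(n_1\ldots n_k)$ with an irreducible symmetric pair of (semi)simple complex Lie algebras. For \emph{existence}, I would exhibit each row of the table as a familiar symmetric decomposition whose isotropy $\mathfrak{h}$ is a direct sum of $\mathfrak{sl}(2)$'s and whose isotropy module $\mathfrak{p}$ has the tensor product shape of (ii). The entries $(1.1)$, $(2.2)$, $(2.1.1)$, $(1.1.1.1)$ come from the orthogonal Grassmannian pairs $\mathfrak{so}(p+q)/(\mathfrak{so}(p)\oplus\mathfrak{so}(q))$ for $(p,q)\in\{(1,4),(3,3),(3,4),(4,4)\}$ via $\mathfrak{so}(3)\cong\mathfrak{sl}(2)$ and $\mathfrak{so}(4)\cong\mathfrak{sl}(2)\oplus\mathfrak{sl}(2)$; entry $(4)$ is $\mathfrak{sl}(3)/\mathfrak{so}(3)$ with $\mathfrak{p}$ the traceless symmetric $3\times3$ matrices $\cong U_4$; entry $(3.1)$ is the exceptional pair $\mathfrak{g}_2/(\mathfrak{sl}(2)\oplus\mathfrak{sl}(2))$ associated to the long- and short-root $\mathfrak{sl}(2)$-subalgebras, with $\mathfrak{p}\cong U_3\otimes U_1$; and entry $(2)$ is the diagonal/anti-diagonal decomposition of $\mathfrak{so}(4)=\mathfrak{sl}(2)\oplus\mathfrak{sl}(2)$. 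In each case Schur's lemma, applied one $\mathfrak{sl}(2)$-factor at a time, forces the commutator on $\mathfrak{p}$ inherited from $\mathfrak{g}$ to coincide, up to rescalings of each $\mathfrak{sl}(2)$-summand of $\mathfrak{h}$, with the canonical $\e$ of (iii); those rescalings can be absorbed into the identification.

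For \emph{uniqueness}, suppose $(n_1\ldots n_k)$ defines a simple decomposition. By Proposition~\ref{prop:simple_sym_decomp} the Lie algebra $\mathfrak{g}$ is semisimple and, apart from the listed $\mathfrak{so}(4)$ case, simple; the construction itself makes $(\mathfrak{g},\mathfrak{h})$ an irreducible symmetric pair with $\mathfrak{h}\cong\mathfrak{sl}(2)^k$ and $\mathfrak{p}$ the irreducible tensor product $U_{n_1}\otimes\cdots\otimes U_{n_k}$. Consulting Cartan's classification of complex irreducible symmetric pairs and retaining only those whose isotropy is a sum of $\mathfrak{sl}(2)$-factors forces $\mathfrak{so}(p)\oplus\mathfrak{so}(q)$-type isotropies to satisfy $p,q\in\{1,3,4\}$, rules out all pairs with exceptional isotropy except $\mathfrak{g}_2/\mathfrak{so}(4)$, and leaves from the unitary series only $\mathfrak{sl}(3)/\mathfrak{so}(3)$ and the $\mathfrak{so}(4)$ diagonal decomposition; this yields exactly the seven entries of the table.

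The main obstacle is to make the uniqueness step self-contained rather than invoking Cartan's list. The natural direct approach is to impose the first Bianchi identity $R\wedge{\rm Id}_{\mathfrak{p}}=0$ on $R=\a\circ\e$ and analyse it via Clebsch--Gordan. For $k=1$, so $\mathfrak{p}=U_n$ with $n$ even, the Bianchi sum is an $\mathfrak{sl}(2)$-equivariant map $\Lambda^3U_n\to U_n$, which by a character calculation vanishes identically iff $U_n\not\subset\Lambda^3U_n$, that is iff $n\in\{2,4\}$; for even $n\geq6$ a short weight-computation on the triple $(v_n,v_{-n+2},v_0)$ produces an explicit nonzero element of $U_n$. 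For $k\geq2$ the Bianchi sum decomposes into $k$ independent $\mathfrak{h}$-equivariant equations, one for each $\mathfrak{sl}(2)$-factor in which the bracket can land, and a factorwise Clebsch--Gordan analysis combined with the parity hypothesis $n_1+\cdots+n_k\equiv0\pmod2$ rules out every tuple outside the table. Organising these vanishing conditions uniformly across all $k$ and all $(n_1\ldots n_k)$, rather than going case by case through the classification, is the principal technical work of the proof.
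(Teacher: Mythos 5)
Your proof takes essentially the same route as the paper: Proposition \ref{prop:simple_sym_decomp} reduces the question to classifying irreducible symmetric pairs whose isotropy algebra is a direct sum of copies of $\mathfrak{sl}(2)$ and whose isotropy module is the irreducible tensor product, and the list is then read off from Cartan's classification (with $\mathfrak{so}(4)=(2)$ as the type~II exception). Your explicit matching of each entry to a known symmetric pair and the sketched Clebsch--Gordan alternative are elaborations the paper leaves implicit, but the underlying argument is the same.
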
 
\begin{proof}
From Proposition \ref{prop:simple_sym_decomp} it follows that, excepting $\mathfrak{so}(4)=(2)$\,, the simple decompositions determine 
real Riemannian symmetric spaces which are irreducible, compact and of type I\,. Then the result follows from the corresponding Cartan's
classification.   
\end{proof}

\indent 
From Theorem \ref{prop:simple_sym_decomp}\,, we obtain $\mathfrak{g}_2=(3.1)$ as a Lie subalgebra of $\mathfrak{so}(7)=(2.1.1)$ and 
the latter as a Lie subalgebra of $\mathfrak{so}(8)=(1.1.1.1)$\,. These give the two \emph{exceptional holonomy representations}, 
as we will, shortly, explain. For now, note that, we, also, have the embeddings of Lie algebras $\mathfrak{so}(4)\subseteq\mathfrak{g}_2$\,,  
$\mathfrak{sl}(3)\subseteq\mathfrak{g}_2$ and $\mathfrak{sl}(3)\subseteq\mathfrak{so}(6)$\,.\\ 
\indent 
Further, the canonical representation of $\mathfrak{so}(7)=(2.1.1)$ reads $U_2\oplus(U_1\otimes U_1)$\,. Then together with the 
embedding $\mathfrak{g}_2\subseteq\mathfrak{so}(7)$ this gives the first exceptional holonomy representation. 
As $\mathfrak{g}_2$ is simple and of dimension $14$\,, and by using $\mathfrak{g}_2=(3.1)\subseteq(2.1.1)=\mathfrak{so}(7)$\,, 
it quickly follows that the first exceptional holonomy representation is irreducible (in fact, this is one of the two fundamental 
representations of $\mathfrak{g}_2$\,, and the other is the adjoint representation).\\ 
\indent 
For the second, we write explicitely $(1.1.1.1)=\bigl(\bigoplus_{j=1}^4{}\mathfrak{sl}(2)^j\bigr)\oplus\bigl(\bigotimes_{j=1}^4U_1^j\bigr)$, 
where $\mathfrak{sl}(2)^j$ and $U_1^j$ are copies of $\mathfrak{sl}(2)$ and $U_1$\,, respectively, $j=1,\ldots,4$\,. 
Then the three fundamental representations of $\mathfrak{so}(8)$ involved in the \emph{triality} are: 
$\bigl(U_1^1\otimes U_1^2\bigr)\oplus\bigl(U_1^3\otimes U_1^4\bigr)$\,, 
$\bigl(U_1^1\otimes U_1^3\bigr)\oplus\bigl(U_1^2\otimes U_1^4\bigr)$ and 
$\bigl(U_1^1\otimes U_1^4\bigr)\oplus\bigl(U_1^2\otimes U_1^3\bigr)$\,.  
The fact that these representations are not equivalent under the group of interior automorphisms of $\mathfrak{so}(8)$ follows from 
Lemma \ref{lem:for_triality}\,, below.\\ 
\indent 
Now, let $\mathfrak{so}(7)=(2.1.1)$ be embedded into $\mathfrak{so}(8)=(1.1.1.1)$ through the morphism 
$\bigoplus_{j=1}^3\mathfrak{sl}(2)^j\to\bigoplus_{j=1}^4\mathfrak{sl}(2)^j$\,, 
$(A_1,A_2,A_3)\mapsto(A_1,A_1,A_2,A_3)$\,, for any $A_1,A_2,A_3\in\mathfrak{sl}(2)$\,. Then  the representation of $\mathfrak{so}(7)$ 
induced by this embedding and $\bigl(U_1^1\otimes U_1^3\bigr)\oplus\bigl(U_1^2\otimes U_1^4\bigr)$ is the second exceptional 
holonomy representation (this is the fundamental (irreducible) representation of $\mathfrak{so}(7)$ of dimension $8$\,; the other two are 
the canonical and the adjoint representations, respectively).

\begin{lem} \label{lem:for_triality} 
Let $\mathfrak{so}(6)=(2.2)$ be embedded into $\mathfrak{so}(8)=(1.1.1.1)$ through the morphism 
$\bigoplus_{j=1}^2\mathfrak{sl}(2)^j\to\bigoplus_{j=1}^4\mathfrak{sl}(2)^j$\,, 
$(A_1,A_2)\mapsto(A_1,A_1,A_2,A_2)$\,, for any $A_1,A_2\in\mathfrak{sl}(2)$\,.\\ 
\indent 
Then the representations of $\mathfrak{so}(6)$ induced by this embedding and the representations 
$\bigl(U_1^1\otimes U_1^2\bigr)\oplus\bigl(U_1^3\otimes U_1^4\bigr)$ and 
$\bigl(U_1^1\otimes U_1^3\bigr)\oplus\bigl(U_1^2\otimes U_1^4\bigr)$ are distinct. 
\end{lem}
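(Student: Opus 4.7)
My plan is to restrict both $\mathfrak{so}(8)$-representations to $\mathfrak{h}=\mathfrak{sl}(2)^1\oplus\mathfrak{sl}(2)^2$ through the given embedding, decompose them into $\mathfrak{h}$-irreducibles, and exhibit a discrepancy already at the level of these isotypic decompositions. Since any equivalence of the two representations under the interior automorphism group of $\mathfrak{so}(8)$ would restrict to an $\mathfrak{h}$-module isomorphism (after an adjustment by an inner automorphism of $\mathfrak{so}(6)$, which cannot change the isomorphism class of the underlying $\mathfrak{h}$-module), it is enough to show the two $\mathfrak{h}$-modules are inequivalent.

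For the first representation $\bigl(U_1^1\otimes U_1^2\bigr)\oplus\bigl(U_1^3\otimes U_1^4\bigr)$, the embedding places $A_1$ on both factors in the first summand and $A_2$ on both factors in the second. Hence each summand becomes, as an $\mathfrak{h}$-module, a copy of $U_1\otimes U_1$ acted on through only one of the two copies of $\mathfrak{sl}(2)$. Using the Clebsch--Gordan decomposition $U_1\otimes U_1\cong U_2\oplus U_0$, I obtain
\begin{equation*}
\bigl(U_2\boxtimes U_0\bigr)\oplus\bigl(U_0\boxtimes U_2\bigr)\oplus 2\bigl(U_0\boxtimes U_0\bigr),
\end{equation*}
where $\boxtimes$ denotes the external tensor product of $\mathfrak{sl}(2)^1$- and $\mathfrak{sl}(2)^2$-modules.

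For the second representation $\bigl(U_1^1\otimes U_1^3\bigr)\oplus\bigl(U_1^2\otimes U_1^4\bigr)$, the embedding places $A_1$ on one factor and $A_2$ on the other in each summand, so as an $\mathfrak{h}$-module this becomes simply $2\bigl(U_1\boxtimes U_1\bigr)$. Since the first decomposition contains a trivial $\mathfrak{h}$-submodule (indeed two copies of it) while the second contains no such submodule (every irreducible summand has positive degree in both $\mathfrak{sl}(2)$-factors), the two $\mathfrak{h}$-modules are inequivalent, and the lemma follows.

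The main thing to be careful about is the reduction from $\mathfrak{so}(8)$-equivalence to $\mathfrak{h}$-equivalence, but this is essentially automatic: an inner automorphism of $\mathfrak{so}(8)$ that carries one representation to the other would, after composition with a suitable inner automorphism of $\mathfrak{so}(6)$, restrict to an $\mathfrak{h}$-module isomorphism, so the argument sketched above is conclusive. The rest is just Clebsch--Gordan bookkeeping.
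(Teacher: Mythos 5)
Your proposal is correct and follows essentially the same route as the paper: both restrict the two representations to $\mathfrak{sl}(2)^1\oplus\mathfrak{sl}(2)^2$ and compare the resulting decompositions, $\bigl(U_2\otimes U_0\bigr)\oplus\bigl(U_0\otimes U_2\bigr)\oplus2\bigl(U_0\otimes U_0\bigr)$ versus $2\bigl(U_1\otimes U_1\bigr)$. Your extra remarks justifying the reduction to $\mathfrak{h}$-module equivalence are sound but not needed beyond the observation that equivalent $\mathfrak{so}(6)$-representations have equivalent restrictions.
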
   
\begin{proof}  
We restrict to $\bigoplus_{j=1}^2\mathfrak{sl}(2)^j\subseteq\mathfrak{so}(6)$ the two representations. 
Then the first one gives 
$\bigl(U_2^1\otimes U_0^2\bigr)\oplus\bigl(U_0^1\otimes U_2^2\bigr)\oplus\bigl(U_0^1\otimes U_0^2\bigr)\oplus\bigl(U_0^1\otimes U_0^2\bigr)$\,, 
whilst the second one gives $\bigl(U_1^1\otimes U_1^2\bigr)\oplus\bigl(U_1^1\otimes U_1^2\bigr)$\,. 
\end{proof} 

\begin{rem} \label{rem:for_triality}  
The first representation of Lemma \ref{lem:for_triality} is the direct sum of the canonical representation of $\mathfrak{so}(6)$ 
and the $2$-dimensional trivial representation. Consequently, the second representation restricted to 
$\mathfrak{sp}(4)\subseteq\mathfrak{sl}(4)$ is $U\oplus U$, where $U$ is the canonical 
representation of $\mathfrak{sp}(4)$\,. 
\end{rem}  

\begin{cor} \label{cor:for_octonions}
There exist embeddings of Lie algebras $\mathfrak{so}(5)\subseteq\mathfrak{so}(6)\subseteq\mathfrak{so}(7)$ such that 
$\mathfrak{g}_2\cap\mathfrak{so}(6)=\mathfrak{sl}(3)$ and $\mathfrak{sl}(3)\cap\mathfrak{so}(5)$ is equal to one of the two 
$\mathfrak{sl}(2)$\!'s involved in the simple decomposition of $\mathfrak{so}(5)$\,.  
\end{cor}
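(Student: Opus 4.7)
The plan is to realize all four subalgebras as stabilizers of suitably chosen vectors, after which the required intersections are forced.

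First, choose a non-isotropic vector $v_0$ in the canonical $7$-dimensional representation of $\mathfrak{so}(7)$ and let $\mathfrak{so}(6)\subseteq\mathfrak{so}(7)$ be its stabilizer. With the embedding $\mathfrak{g}_2\subseteq\mathfrak{so}(7)$ fixed as in the discussion preceding the lemma, take $\mathfrak{sl}(3)\subseteq\mathfrak{g}_2$ to be the stabilizer of $v_0$ inside $\mathfrak{g}_2$; this is the standard realization. By construction, $\mathfrak{g}_2\cap\mathfrak{so}(6)=\mathfrak{sl}(3)$.

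Next, view the canonical $6$-dimensional representation $W=v_0^\perp$ of $\mathfrak{so}(6)$ as an $\mathfrak{sl}(3)$-module; through $\mathfrak{sl}(3)\subseteq\mathfrak{sl}(4)=\mathfrak{so}(6)$ it decomposes as $\mathbb{C}^3\oplus(\mathbb{C}^3)^*$. Pick $v\in\mathbb{C}^3\setminus\{0\}$ and $\xi\in(\mathbb{C}^3)^*$ with $\xi(v)\neq0$, set $w=v+\xi$ (non-isotropic in $W$), and define $\mathfrak{so}(5)\subseteq\mathfrak{so}(6)$ as the stabilizer of $w$. A routine computation in a basis with $v=e_1$, $\xi=e^1$ identifies $\mathfrak{sl}(3)\cap\mathfrak{so}(5)$ with the $\mathfrak{sl}(2)$ acting naturally on $\mathrm{span}(e_2,e_3)\subset\mathbb{C}^3$.

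The remaining task is to match this $\mathfrak{sl}(2)$ with one of the two $\mathfrak{sl}(2)$ factors of the simple decomposition $\mathfrak{so}(5)=(1.1)$. By Theorem \ref{thm:simple_sym_decomp} together with Cartan's classification, that decomposition corresponds to the symmetric pair $(\mathfrak{so}(5),\mathfrak{so}(4))$, so the two factors in question are the summands of $\mathfrak{so}(4)=\mathfrak{sl}(2)\oplus\mathfrak{sl}(2)$. To exhibit an ambient $\mathfrak{so}(4)$, introduce $w'=v-\xi$ (non-isotropic and orthogonal to $w$) and take $\mathfrak{so}(4)\subseteq\mathfrak{so}(5)$ to be the stabilizer of both $w$ and $w'$; the $\mathfrak{sl}(2)$ above fixes $v$ and $\xi$ separately, hence lies inside this $\mathfrak{so}(4)$.

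The main obstacle is this very last identification: ruling out the possibility that $\mathfrak{sl}(2)\hookrightarrow\mathfrak{so}(4)=\mathfrak{sl}(2)^+\oplus\mathfrak{sl}(2)^-$ is a diagonal (or otherwise non-trivial) embedding rather than the inclusion of one factor. I would handle this representation-theoretically: the canonical $4$-dimensional rep of $\mathfrak{so}(4)$ is $U_1^+\otimes U_1^-$, whereas in our situation $\{w,w'\}^\perp\cong\mathbb{C}^2\oplus(\mathbb{C}^2)^*$ with our $\mathfrak{sl}(2)$ acting standardly on the first summand and dually on the second, i.e.\ as $U_1\oplus U_1$. Only the inclusion of one of the two $\mathfrak{sl}(2)$ factors produces $U_1\oplus U_1$ on restriction (the diagonal would yield $U_2\oplus U_0$), which pins down the embedding and concludes the proof.
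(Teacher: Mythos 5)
Your proof is correct, but it follows a genuinely different route from the paper's. The paper obtains the intersections by making the embeddings explicit through the low\-/dimensional isomorphisms: it identifies $\mathfrak{so}(6)=\mathfrak{sl}(U_1\otimes U_1)$ via $\Lambda^2(U_1\otimes U_1)=U_2\oplus U_2$, realises $\mathfrak{sl}(3)$ as $\mathfrak{sl}(U_2)$ using $U_1\otimes U_1=U_2\oplus U_0$, and realises $\mathfrak{so}(5)$ as $\mathfrak{sp}(U_1\oplus U_1)$ for a suitable symplectic form, so that the intersection is read off as $\mathfrak{sl}(U_1)$; the identification with a factor of the simple decomposition $(1.1)$ is then essentially built into the tensor-product picture. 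You instead realise everything as stabilizers of non-isotropic vectors: $\mathfrak{so}(6)$ and $\mathfrak{so}(5)$ as stabilizers in the orthogonal representations, $\mathfrak{sl}(3)$ as the stabilizer of $v_0$ in $\mathfrak{g}_2$ (the classical $S^6=G_2/{\rm SU}(3)$ picture), and then compute the intersection by hand in the decomposition $v_0^{\perp}=\mathbb{C}^3\oplus(\mathbb{C}^3)^*$. This makes the first equality $\mathfrak{g}_2\cap\mathfrak{so}(6)=\mathfrak{sl}(3)$ immediate, at the cost of quoting the standard fact about the stabilizer of a non-isotropic vector in $\mathfrak{g}_2$, whereas the paper derives it from Theorem~\ref{thm:simple_sym_decomp}. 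Your final step --- distinguishing the inclusion of a factor from a diagonal embedding into $\mathfrak{so}(4)=\mathfrak{sl}(2)\oplus\mathfrak{sl}(2)$ by comparing the restrictions $U_1\oplus U_1$ versus $U_2\oplus U_0$ of the canonical four-dimensional representation --- is sound (any nonzero homomorphism of $\mathfrak{sl}(2)$ into a factor is an isomorphism onto it, so these are the only two cases), and it actually supplies a justification that the paper compresses into ``the proof quickly follows''. The one point you should make explicit is that your $\mathfrak{so}(4)$, being the stabilizer of a non-isotropic vector in the five-dimensional representation, is indeed the $\mathfrak{h}$-part of a simple decomposition $(1.1)$ of $\mathfrak{so}(5)$; this holds because all such stabilizers are conjugate and the decomposition $(1.1)$ is the symmetric decomposition of the pair $(\mathfrak{so}(5),\mathfrak{so}(4))$, as you indicate.
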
 
\begin{proof} 
The second embedding and the first equality follow from Theorem \ref{thm:simple_sym_decomp}\,.\\ 
\indent 
To complete the proof we, firstly, make the obtained embedding $\mathfrak{sl}(3)\subseteq\mathfrak{so}(6)$ more explicit. 
For this, note that, $\Lambda^2(U_1\otimes U_1)=U_2\oplus U_2$ gives the well-known identification 
$\mathfrak{sl}(U_1\otimes U_1)=\mathfrak{so}(U_2\oplus U_2)$\,. As $U_1\otimes U_1=U_2\oplus U_0$\,, we obtain the embedding  
$\mathfrak{sl}(U_2)\subseteq\mathfrak{so}(U_2\oplus U_2)$\,.\\ 
\indent 
Now, a suitable basis of $U_1$ makes $U_1\otimes U_1=U_1\oplus U_1$ a symplectic vector space such that 
$\mathfrak{sl}(U_2)\cap\mathfrak{sp}(U_1\oplus U_1)=\mathfrak{sl}(U_1)$\,, and the proof quickly follows. 
\end{proof}

\begin{cor} \label{cor:for_Cayley_cross_product}  
There exists a reductive decomposition $\mathfrak{so}(7)=\mathfrak{g}_2\oplus\mathfrak{p}$ such that the induced 
representation of $\mathfrak{g}_2$ on $\mathfrak{p}$ is the first exceptional holonomy representation. 
\end{cor}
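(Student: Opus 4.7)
The plan is to produce $\mathfrak{p}$ by complete reducibility and then identify it via a dimension count combined with Schur's lemma. Since $\mathfrak{g}_2$ is semisimple, the restriction to $\mathfrak{g}_2\subseteq\mathfrak{so}(7)$ of the adjoint representation of $\mathfrak{so}(7)$ is completely reducible. As $\mathfrak{g}_2$ itself is a $\mathfrak{g}_2$-invariant submodule (namely, the adjoint representation of $\mathfrak{g}_2$), it admits a $\mathfrak{g}_2$-invariant complement $\mathfrak{p}$; this yields a reductive decomposition $\mathfrak{so}(7)=\mathfrak{g}_2\oplus\mathfrak{p}$ with $\dim\mathfrak{p}=21-14=7$.

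To identify $\mathfrak{p}$ I would decompose it into $\mathfrak{g}_2$-irreducibles. Every nontrivial irreducible $\mathfrak{g}_2$-representation has dimension at least $7$, so the $7$-dimensional module $\mathfrak{p}$ is either a direct sum of seven trivial representations or a single copy of the $7$-dimensional irreducible, which is precisely the first exceptional holonomy representation.

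The main step is to exclude the trivial possibility, that is, to show that the centralizer of $\mathfrak{g}_2$ in $\mathfrak{so}(7)$ vanishes. An element $X$ in this centralizer acts on the canonical $\mathfrak{so}(7)$-module $V=U_2\oplus(U_1\otimes U_1)$ as a $\mathfrak{g}_2$-intertwiner; by the irreducibility of $V$ as a $\mathfrak{g}_2$-representation, noted earlier in the paper, Schur's lemma forces $X$ to act on $V$ as a scalar $\lambda$. But skew-symmetry of $X$ with respect to the defining bilinear form on $V$ yields $2\lambda=0$, whence $X=0$. Consequently $\mathfrak{p}$ contains no trivial summand, must be irreducible of dimension $7$, and is identified with the first exceptional holonomy representation. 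I expect this Schur-type argument to be the only step requiring input beyond complete reducibility and dimension counting.
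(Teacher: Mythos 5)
Your argument is correct, but it follows a genuinely different route from the paper. The paper obtains the decomposition constructively: it invokes Corollary~\ref{cor:for_octonions} (the compatible chain $\mathfrak{so}(5)\subseteq\mathfrak{so}(6)\subseteq\mathfrak{so}(7)$ with $\mathfrak{g}_2\cap\mathfrak{so}(6)=\mathfrak{sl}(3)$ and $\mathfrak{sl}(3)\cap\mathfrak{so}(5)=\mathfrak{sl}(2)$) together with the reductive decomposition $\mathfrak{so}(5)=\mathfrak{sl}(2)\oplus\mathfrak{p}$ coming from the simple decomposition $\mathfrak{so}(5)=(1.1)$, so that the complement $\mathfrak{p}\subseteq\mathfrak{so}(7)$ is exhibited explicitly in terms of the $(2.1.1)$ picture. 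You instead argue softly: complete reducibility of the adjoint action of the semisimple $\mathfrak{g}_2$ gives \emph{some} invariant complement of dimension $7$; the Schur-plus-skew-symmetry computation kills the centralizer of $\mathfrak{g}_2$ in $\mathfrak{so}(7)$ and hence any trivial summand; and the classical facts that every nontrivial irreducible representation of $\mathfrak{g}_2$ has dimension at least $7$ and that the $7$-dimensional one is unique identify $\mathfrak{p}$ with the first exceptional holonomy representation. Your Schur step is clean (the canonical $\mathfrak{so}(7)$-module is irreducible over $\mathfrak{g}_2$, as the paper notes, and a skew-symmetric scalar vanishes), and the dimension bound you cite is standard. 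What your approach buys is brevity and independence from the octonionic chain; what it loses is the explicit description of $\mathfrak{p}$, which the paper uses immediately afterwards to compute the torsion of the decomposition and recognize it as the Cayley cross product --- with your non-constructive $\mathfrak{p}$ that subsequent calculation would still have to be set up separately.
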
 
\begin{proof}
This follows from Corollary \ref{cor:for_octonions} by using the reductive decomposition 
$\mathfrak{so}(5)=\mathfrak{sl}(2)\oplus\mathfrak{p}$ obtained from its simple decomposition. 
\end{proof}

\indent 
Now, a straightforward calculation shows that the `torsion' of the decomposition $\mathfrak{so}(7)=\mathfrak{g}_2\oplus\mathfrak{p}$ 
of Corollary \ref{cor:for_Cayley_cross_product}\,, essentially, is the Cayley cross product \cite{Sal-holo_book}\,. 
This gives a simple proof of the following classical result. 

\begin{thm} \label{thm:Cayley_cross_product} 
The Lie algebra of infinitesimal automorphisms of the Cayley cross product is $\mathfrak{g}_2$\,. 
\end{thm}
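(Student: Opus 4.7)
The plan is to identify the Cayley cross product $P$ with the torsion of the reductive decomposition $\mathfrak{so}(7)=\mathfrak{g}_2\oplus\mathfrak{p}$ of Corollary~\ref{cor:for_Cayley_cross_product}\,, that is, with the map $P\colon\Lambda^2\mathfrak{p}\to\mathfrak{p}$\,, $P(Y,Z)=[Y,Z]_{\mathfrak{p}}$\,, where the subscript denotes projection along $\mathfrak{g}_2$\,, and then to show that the stabiliser $\mathfrak{k}\subseteq\mathfrak{so}(7)$ of $P$ equals $\mathfrak{g}_2$\,. The identification of this torsion, up to isomorphism, with the Cayley cross product is the straightforward calculation alluded to just before the statement.

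The inclusion $\mathfrak{g}_2\subseteq\mathfrak{k}$ amounts to the $\mathfrak{g}_2$-equivariance of $P$\,, and I would read it off the Jacobi identity: for $X\in\mathfrak{g}_2$ and $Y,Z\in\mathfrak{p}$\,, projecting $[X,[Y,Z]]=[[X,Y],Z]+[Y,[X,Z]]$ onto $\mathfrak{p}$ and using $[\mathfrak{g}_2,\mathfrak{g}_2]\subseteq\mathfrak{g}_2$ together with $[\mathfrak{g}_2,\mathfrak{p}]\subseteq\mathfrak{p}$ yields $[X,P(Y,Z)]=P([X,Y],Z)+P(Y,[X,Z])$ at once.

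For the reverse inclusion I would first invoke the maximality of $\mathfrak{g}_2$ in $\mathfrak{so}(7)$\,: any intermediate subalgebra is $\mathrm{ad}(\mathfrak{g}_2)$-invariant, hence of the form $\mathfrak{g}_2\oplus V$ with $V\subseteq\mathfrak{p}$ a $\mathfrak{g}_2$-submodule, and the irreducibility of the first exceptional holonomy representation leaves only $V=0$ or $V=\mathfrak{p}$\,. It therefore suffices to rule out $\mathfrak{k}=\mathfrak{so}(7)$\,, and this is pure Schur: if $P$ were $\mathfrak{so}(7)$-equivariant then, identifying $\mathfrak{p}$ with the standard $7$-dimensional $\mathfrak{so}(7)$-module and $\Lambda^2\mathfrak{p}$ with $\mathfrak{so}(7)$ acting on itself adjointly, $P$ would provide a nonzero morphism between two non-isomorphic irreducible $\mathfrak{so}(7)$-modules (of dimensions $21$ and $7$), which is impossible.

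The only really conceptual step is the preliminary identification of the torsion of $\mathfrak{so}(7)=\mathfrak{g}_2\oplus\mathfrak{p}$ with the Cayley cross product; once this is granted, the whole argument collapses to one application of Jacobi and one application of Schur, with no actual computation to perform.
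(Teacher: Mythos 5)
Your proof is correct and follows essentially the same route as the paper's: the paper likewise obtains $\mathfrak{g}_2\subseteq\mathfrak{k}$ from the identification of the Cayley cross product with the torsion of the reductive decomposition $\mathfrak{so}(7)=\mathfrak{g}_2\oplus\mathfrak{p}$ and then concludes from the irreducibility of $\mathfrak{p}$ under $\mathfrak{g}_2$\,, compressing into ``the proof quickly follows'' exactly the maximality-plus-Schur step you spell out. The only caveat, which you share with the paper, is that both arguments implicitly take the infinitesimal automorphisms to lie in $\mathfrak{so}(7)$ rather than all of $\mathfrak{gl}(7)$\,, which rests on the standard fact that the Cayley cross product determines the underlying inner product.
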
 
\begin{proof}
We have that $\mathfrak{g}_2$ preserves the Cayley cross product as this is the torsion of the reductive decomposition 
$\mathfrak{so}(7)=\mathfrak{g}_2\oplus\mathfrak{p}$\,. As the representation of $\mathfrak{g}_2$ 
on $\mathfrak{p}$ is irreducible, the proof quickly follows.  
\end{proof}

\section{Twistor theory for exceptional holonomy} 

\indent 
It is quite well-known that all of the Riemannian symmetric spaces infinitesimally described in Theorem \ref{thm:simple_sym_decomp} 
are endowed with quaternionic-like structures \cite{Pan-hqo}\,, and, consequently, with twistorial structures. Less well-known are those 
corresponding to $\mathfrak{sl}(3)$ and $\mathfrak{g}_2$\,. For the former see \cite{Pan-ro-q_tame} (and the references therein) 
whilst the latter is well-known as a (complex) Wolf space. Furthermore, the embedding $\mathfrak{g}_2\subseteq\mathfrak{so}(7)$   
induces on it a Hermitian and a quaternionic-like structure (compare \cite{SveWoo-2015}\,) corresponding to the exact sequence of vector bundles 
$$0\longrightarrow4\ol(-2)\longrightarrow\C\!P^1\times\C^{\!8}\longrightarrow4\ol(2)\longrightarrow0\;,$$ 
where $\ol(-1)$ denotes the tautological line bundle over the Riemann sphere. Note that, the Grassmannian 
${\rm Gr}_3^+(7)$ is the heaven space of $G_2/{\rm SO}(4)$\,, where the former is endowed with its $f$-quaternionic structure 
(see \cite{Pan-hqo}\,, and the references therein) with twistor space the $5$-dimensional nondegenerate hyperquadric $Q^5$, 
and $G_2$ denotes the symply-connected (complex) Lie group with Lie algebra $\mathfrak{g}_2$\,.\\ 
\indent 
The next result, also, involves the Grassmannians ${\rm Gr}_3^0(7)$ and ${\rm Gr}_4^0(8)$ of isotropic subspaces 
of dimensions $3$ and $4$ of the irreducible representations spaces of $\mathfrak{so}(7)$ of dimensions $7$ and $8$\,, respectively, 
and the $6$-dimensional hyperquadric $Q^6$ in the projectivisation of the exceptional holonomy representation space 
of $\mathfrak{so}(7)$\,. 

\begin{prop} \label{prop:e_Euclid_twist_str} 
{\rm (i)} There exists a $\mathfrak{so}(7)$-invariant diffeomorphism from ${\rm Gr}_3^0(7)$ onto~$Q^6$.\\ 
\indent 
{\rm (ii)} There exists a $\mathfrak{g}_2$-invariant embedding of $Q^5$ into ${\rm Gr}_3^0(7)$\,.\\ 
\indent 
{\rm (iii)} There exists an embedding of $Q^5$ into ${\rm Gr}_4^0(8)$\,, which is equivariant with respect to the 
embedding morphism from $\mathfrak{g}_2$ into $\mathfrak{so}(7)$\,.  
\end{prop}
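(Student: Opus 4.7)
The plan is to establish the three parts in turn, using the representation-theoretic description of the exceptional holonomy representations developed in Section~\ref{section:simple_sym_decomp}. For (i), I would identify the $8$-dimensional exceptional holonomy representation of $\mathfrak{so}(7)$ with the spinor module and invoke Cartan's pure spinor correspondence: to each $3$-dimensional isotropic subspace $W$ of the canonical $7$-dimensional representation one associates the line of spinors annihilated by $W$ under Clifford multiplication, yielding a ${\rm Spin}(7)$-equivariant map ${\rm Gr}_3^0(7)\to\C\!P^7$. Both ${\rm Gr}_3^0(7)$ and $Q^6\subseteq\C\!P^7$ are closed ${\rm Spin}(7)$-orbits of complex dimension $6$; for the algebra $B_3$, the maximal parabolic stabilising a maximal isotropic subspace coincides with the stabiliser of a highest-weight line in the spin representation (both correspond to the spin node of the Dynkin diagram), so the two homogeneous spaces are equivariantly diffeomorphic, giving the identification $Q^6\cong{\rm Gr}_3^0(7)$.

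For (ii), I would work with the $G_2$-invariant cross product $\times$ on the canonical $7$-dimensional representation. The standard cross-product identity
$$v\times(v\times w)=\langle v,w\rangle\,v-\langle v,v\rangle\,w$$
shows that, for any isotropic vector $v$, the operator $J_v:=v\times(\cdot)$ restricted to the $6$-dimensional subspace $v^\perp$ (which contains $v$) is nilpotent of square zero. Hence the image of $J_v|_{v^\perp}$ lies in its kernel, and by $G_2$-transitivity on $Q^5$ their common dimension is constant in $v$; an explicit calculation at a standard point gives $3$. The further identity $\langle v\times x,v\times y\rangle=\langle v,v\rangle\langle x,y\rangle-\langle v,x\rangle\langle v,y\rangle$, applied with $x,y\in v^\perp$, shows that $W_v:=v\times v^\perp$ is isotropic. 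The assignment $[v]\mapsto W_v$ is manifestly $G_2$-equivariant; since the stabiliser of $[v]$ in $G_2$ is obviously contained in that of $W_v$ (as $W_v$ is built from $v$ by $G_2$-invariant operations) and, by a dimension count against $\dim Q^5=5$, the two parabolic subgroups must coincide, so the map is an embedding.

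For (iii), the idea is to lift the embedding from (ii) to the $8$-dimensional representation. A $G_2$-invariant non-zero vector $\psi_0$ in this representation---whose existence is precisely what characterises $\mathfrak{g}_2\subseteq\mathfrak{so}(7)$---together with Schur's lemma applied to the $\mathfrak{so}(8)$-invariant bilinear form yields an orthogonal $\mathfrak{g}_2$-equivariant decomposition of the $8$-dimensional space into $\C\,\psi_0$ and a copy of the canonical $7$-dimensional $\mathfrak{g}_2$-representation, with each summand non-degenerately paired. An elementary analysis then shows that every isotropic $4$-plane in this orthogonal sum is determined by its (necessarily $3$-dimensional isotropic) projection $W$ onto the $7$-dimensional summand together with a class in the one-dimensional space $W^\perp/W$ of prescribed nonzero norm; exactly two such classes exist, exhibiting ${\rm Gr}_4^0(8)$ as two disjoint $\mathfrak{g}_2$-equivariant copies of ${\rm Gr}_3^0(7)$. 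Composing either of these sections with the embedding from (ii) then produces the desired $Q^5\hookrightarrow{\rm Gr}_4^0(8)$.

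The main obstacle is the computation in (ii) that $W_v$ has dimension exactly $3$ and is isotropic, which rests squarely on the $G_2$-specific cross-product identities used above; part (i) reduces to the classical pure spinor theory of $B_3$, and (iii) then follows rather formally from (i), (ii), and the $\mathfrak{g}_2$-invariant splitting of the $8$-dimensional representation afforded by the fixed spinor $\psi_0$.
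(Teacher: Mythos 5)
Your proposal is correct in substance but follows a genuinely different route from the paper. The paper disposes of all three assertions by the same uniform device: it uses the simple decompositions $\mathfrak{so}(7)=(2.1.1)$, $\mathfrak{g}_2=(3.1)$ and $\mathfrak{so}(8)=(1.1.1.1)$ of Theorem \ref{thm:simple_sym_decomp} (together with Corollary \ref{cor:for_octonions} for (ii)) to compare the isotropy Lie algebras of the relevant infinitesimal actions at suitably chosen points, concluding each time that two homogeneous spaces coincide. You instead argue with the classical structures themselves: pure spinor theory for $B_3$ in (i), the $G_2$ cross product in (ii), and the $\mathfrak{g}_2$-invariant splitting $\C^{8}=\C\,\psi_0\oplus\C^{7}$ in (iii). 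Your approach buys explicit formulas for the embeddings (e.g.\ $[v]\mapsto v\times v^{\perp}$, which one checks at the standard point $v=a_1$ in the $\mathfrak{sl}(3)$-model to be the isotropic $3$-plane ${\rm span}(a_1,\alpha_2,\alpha_3)$), which is genuinely useful downstream, for instance when writing the integrability obstruction in Theorem \ref{thm:eholon_G2}; the paper's approach is shorter and stays entirely inside the representation-theoretic framework of Section \ref{section:simple_sym_decomp}. Two small points in your write-up deserve tightening. In (ii), the ``dimension count'' alone does not force ${\rm Stab}([v])={\rm Stab}(W_v)$: a priori the orbit of $W_v$ could have dimension $<5$. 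The correct one-line argument is that ${\rm Stab}(W_v)$ is a parabolic containing the \emph{maximal} parabolic ${\rm Stab}([v])$, hence equals it or all of $G_2$, and the latter would make $W_v$ a $\mathfrak{g}_2$-invariant subspace of the irreducible $\C^{7}$ (alternatively, $[v]$ is recovered from $W_v$ as $W_v\times W_v$). In (iii), since $\psi_0$ is non-isotropic an isotropic $4$-plane $U$ meets $\C\,\psi_0$ trivially, so its projection to $\C^{7}$ is $4$-dimensional; the $3$-dimensional isotropic subspace you want is $W=U\cap\psi_0^{\perp}=U\cap\C^{7}$, the graph description over which then proceeds exactly as you indicate. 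Neither issue affects the validity of the argument.
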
 
\begin{proof} 
Assertion (i) follows from Theorem \ref{thm:simple_sym_decomp} by checking that the infinitesimal actions of 
$\mathfrak{so}(7)$ on ${\rm Gr}_3^0(7)$ and $Q^6$ have the same isotropy Lie algebras, with respect to suitably chosen points.\\  
\indent 
Assertion (ii) follows from (i) and Corollary \ref{cor:for_octonions} (or it can be proved directly).\\ 
\indent 
For assertion (iii)\,, note that, the intersection of $\mathfrak{g}_2$ and the isotropy Lie algebra, at a suitably chosen point, 
of the action of $\mathfrak{so}(7)$ on ${\rm Gr}_4^0(8)$ is equal to the isotropy Lie algebra 
of the action of $\mathfrak{g}_2$ on $Q^5$.    
\end{proof} 

\indent 
The embeddings $Q^5\subseteq{\rm Gr}_3^0(7)$ and $Q^5\subseteq{\rm Gr}_4^0(8)$ of Proposition \ref{prop:e_Euclid_twist_str} give 
the Euclidean twistorial structures \cite{DesLouPan} we are interested in.\\ 
\indent 
From now on, as a structural group, $G_2$ is considered either with its exceptional holonomy representation or with the representation 
induced through the embedding $G_2\subseteq{\rm Spin}(7)$\,, where the latter is considered with its exceptional holonomy representation. 

\begin{thm} \label{thm:eholon_G2} 
Any Riemannian manifold with holonomy group contained by $G_2$ is endowed with a Riemannian twistorial structure, 
given at each point by the embeddings $Q^5\to{\rm Gr}_4(7)$\,, $p\mapsto p^{\perp}$, 
for any $p\in Q^5\subseteq{\rm Gr}_3^0(7)$\,, or $Q^5\subseteq{\rm Gr}_4^0(8)$\,, respectively.\\ 
\indent 
Moreover, on the, locally, obtained twistor spaces there exist distributions transversal to the twistor submanifolds 
and of dimensions $3$ or $4$\,, respectively.     
\end{thm}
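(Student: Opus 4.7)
The plan is to promote the pointwise embeddings of Proposition \ref{prop:e_Euclid_twist_str} to a global twistorial structure by transporting them fibrewise over $M$ via the $G_2$- or $\mathrm{Spin}(7)$-principal bundle furnished by the holonomy reduction, to verify the integrability condition that the holonomy hypothesis forces on the Riemannian curvature, and to read off the transversal distribution from the reductive decomposition of Corollary \ref{cor:for_Cayley_cross_product}.

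For the globalisation, the hypothesis that the holonomy of $M$ lies in $G_2$ reduces the structure group of the complexified tangent bundle to $G_2$, acting either through its $7$-dimensional exceptional holonomy representation or through $G_2\subseteq\mathrm{Spin}(7)$. Proposition \ref{prop:e_Euclid_twist_str}(ii), (iii) then supplies, at every $x\in M$, a $\mathfrak{g}_2$-equivariant embedding $Q^5\hookrightarrow\mathrm{Gr}_3^0(T_x^{\mathbb{C}}M)$, respectively $Q^5\hookrightarrow\mathrm{Gr}_4^0(T_x^{\mathbb{C}}M)$. In the first case, post-composing with $V\mapsto V^\perp$ yields the desired map into $\mathrm{Gr}_4(T_x^{\mathbb{C}}M)$. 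Locally these assemble into a $Q^5$-fibre bundle $Z\to M$ carrying a tautological distribution of $4$-planes in $T^{\mathbb{C}}M$.

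The integrability required to upgrade this pointwise data to a Riemannian twistorial structure in the sense of \cite{DesLouPan} is, following the philosophy recalled in the Introduction, controlled by the component of the Riemannian curvature seen by the twistor parameter $Q^5$. I would invoke the classical description of the curvature tensor of a manifold with holonomy in $G_2$ as, at each point, a holomorphic section of a positive line bundle over the closed adjoint orbit of $G_2$ in $\mathbb{P}(\mathfrak{g}_2)$, an orbit that coincides with $Q^5$ (see \cite{Bry-1987}, \cite{Sal-holo_book}, as cited in the Introduction). The $\mathfrak{g}_2$-equivariance of the embedding together with the first Bianchi identity then forces the obstruction to integrability to vanish; the $\mathrm{Spin}(7)$-case follows by restricting along $\mathfrak{g}_2\subseteq\mathfrak{so}(7)$. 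This curvature step --- the only one that uses the holonomy hypothesis rather than the mere existence of a $G_2$-structure --- is where I expect the main work to lie.

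Finally, for the transversal distribution, the reductive decomposition $\mathfrak{so}(7)=\mathfrak{g}_2\oplus\mathfrak{p}$ of Corollary \ref{cor:for_Cayley_cross_product}, with $\mathfrak{p}$ the first exceptional holonomy representation of $\mathfrak{g}_2$ of complex dimension $7$, provides at each point $(x,p)\in Z$ a canonical $\mathfrak{g}_2$-equivariant $3$-dimensional complement to the tautological $4$-plane $V^\perp$ inside $T_x^{\mathbb{C}}M$, and hence the claimed distribution transversal to the tangent spaces of the twistor submanifolds. The $\mathrm{Spin}(7)$-case is analogous via the reductive complement inside $\mathfrak{so}(8)$ obtained from the chain $\mathfrak{g}_2\subseteq\mathfrak{so}(7)\subseteq\mathfrak{so}(8)$ of Theorem \ref{thm:simple_sym_decomp}, yielding the $4$-dimensional distribution.
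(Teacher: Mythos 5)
Your overall architecture matches the paper's: globalise the pointwise embeddings of Proposition \ref{prop:e_Euclid_twist_str} via the holonomy reduction, and use the special position of the curvature of a $G_2$-manifold to kill the obstruction to integrability. You also correctly locate where the holonomy hypothesis enters: $R$ is, pointwise, a section of a line bundle over the closed adjoint orbit of $G_2$, i.e.\ it lies in the irreducible $G_2$-module $U_{0,2}$. But the decisive step is missing. You write that ``the $\mathfrak{g}_2$-equivariance of the embedding together with the first Bianchi identity forces the obstruction to vanish''; that cannot be the whole argument, since for a generic Riemannian $7$-manifold the same equivariance and the same Bianchi identity hold, and yet the integrability condition $R\bigl(p^{\perp},p^{\perp}\bigr)(p^{\perp})\subseteq p^{\perp}$, $p\in Q^5$, fails (the Weyl tensor obstructs, exactly as recalled in the Introduction). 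The paper's mechanism is a Schur-type argument: the obstruction is a $G_2$-equivariant map whose target decomposes into irreducibles $U_{m,n}$ with $m\geq1$ only, whereas the holonomy hypothesis places $R$ in $U_{0,2}$; hence the obstruction map vanishes identically on the component where $R$ lives. Without this comparison of irreducible constituents (or an equivalent direct computation), the integrability claim is asserted rather than proved.

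The argument you give for the transversal distributions is also not right as stated. The reductive complement $\mathfrak{p}$ in $\mathfrak{so}(7)=\mathfrak{g}_2\oplus\mathfrak{p}$ of Corollary \ref{cor:for_Cayley_cross_product} is $7$-dimensional (a copy of the holonomy representation sitting inside $\Lambda^2\mathbb{C}^{7}$), so it does not single out a $3$-dimensional complement to $p^{\perp}$ in $T_x^{\mathbb{C}}M$; moreover, since $p\subseteq p^{\perp}$, the stabiliser of $p$ in $G_2$ is a parabolic subgroup which preserves only the filtration $p\subseteq p^{\perp}\subseteq\mathbb{C}^{7}$ and no equivariant splitting of it. What is actually needed is the projectability, to the local twistor space, of a complementary distribution on the bundle of quadrics, and the paper obtains this by the same representation-theoretic vanishing as above --- this is what ``the second case and the last statement are similar'' refers to.
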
   
\begin{proof}
If $G_2$ is considered with its exceptional holonomy representation, the obstruction to integrability reads 
$R\bigl(p^{\perp},p^{\perp}\bigr)(p^{\perp})\subseteq p^{\perp}$,  
for any $p\in Q^5\subseteq{\rm Gr}_3^0(7)$\,, where $R$ is the Riemannian curvature form (of the Levi-Civita connection).\\ 
\indent 
Also, it is well-known that the irreducible representations of $G_2$ are parametrized by $\mathbb{N}\times\mathbb{N}$\,. 
For example, the first exceptional holonomy representation corresponds to $(1,0)$\,, the adjoint representation to $(0,1)$\,, and 
$R$ is contained by the representation space corresponding to $(0,2)$ (see \cite{Bry-1987}\,, \cite{Sal-holo_book}\,).\\ 
\indent 
On the other hand, the mentioned obstruction to integrability is contained in a direct sum of irreducible representation spaces 
corresponding to pairs from $\bigl(\mathbb{N}\setminus\{0\}\bigr)\times\mathbb{N}$\,, and the proof follows.\\ 
\indent 
The second case and the last statement are similar.   
\end{proof} 

\indent 
For any $G_2$-manifold (where $G_2$ is considered with its exceptional holonomy representation) 
it can be shown that, also, the almost twistorial structure given by $Q^5\subseteq{\rm Gr}_3^0(7)$ is integrable. 
Then the corresponding `CR twistor space' is the twistor space of a ${\rm Spin}(7)$-manifold, with holonomy group 
contained by $G_2$\,, so that the latter manifold appears as the `heaven space' 
of the former (in the real setting, the CR twistor space of the 
$G_2$-manifold is a real hypersurface of the twistor space of the ${\rm Spin}(7)$-manifold; 
compare \cite{Pan-hqo} and the references therein); moreover, there exists 
a natural twistorial retraction of this inclusion map.\\ 
\indent 
The consequences for twistorial harmonic maps can be straightforwardly deduced.\\ 
\indent 
Theorem \ref{thm:eholon_G2} can be strenghten as follows, where, for simplicity, we deal only with the $7$-dimensional case. 

\begin{cor} \label{cor:eholon_G2} 
Let $M$ be a $G_2$-manifold and let $Z$ be its (local) twistor space. Then the $(G_2$-$)$frame bundle of $M$ corrresponds,  
under the Ward transformation, to a principal bundle $\mathcal{P}$ over $Z$. Moreover, there exists a surjective submersion 
$\chi:\mathcal{P}\to Q^5$ whose fibre, over any $\xi\in Q^5$, is the total space of a principal subbundle of $\mathcal{P}$ 
with structural group the isotropy subgroup of $G_2$ at $\xi$.  
\end{cor}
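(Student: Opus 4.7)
The plan is to realise $\mathcal{P}$ as the pull-back of the $G_2$-frame bundle $F(M)$ along the twistor projection $\pi\colon Z\to M$, and then to construct $\chi$ from the $G_2$-invariance of the embedding $Q^5\subseteq{\rm Gr}_3^0(7)$ provided by Proposition \ref{prop:e_Euclid_twist_str}(ii). Accordingly, I would first invoke Theorem \ref{thm:eholon_G2} to guarantee that the pull-back of the Levi-Civita connection to $\pi^*F(M)$ has curvature vanishing along the twistor submanifolds (the fibres of $\pi$); in the Ward framework of \cite{DesLouPan}, this is exactly what is needed for $\pi^*F(M)$ to qualify as the Ward transform of $F(M)$, so that I may set $\mathcal{P}=\pi^*F(M)$.

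Next, for each $z\in Z$ lying over $x\in M$, I would view $z$ as an isotropic $3$-plane $\xi_z\in Q^5\subseteq{\rm Gr}_3^0(T_xM\otimes\mathbb{C})$, the last inclusion being the image of $Q^5\subseteq{\rm Gr}_3^0(7)$ under any $G_2$-frame at $x$ (well-defined thanks to the $G_2$-invariance of $Q^5$). Then I would define
\[
\chi\colon\mathcal{P}\longrightarrow Q^5,\qquad (z,f)\longmapsto f^{-1}(\xi_z),
\]
where $f\colon\mathbb{C}^7\to T_{\pi(z)}M\otimes\mathbb{C}$ is the $G_2$-frame representing the given point of $\mathcal{P}$. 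The target is indeed $Q^5$, rather than just ${\rm Gr}_3^0(7)$, again by the $G_2$-invariance of $Q^5\subseteq{\rm Gr}_3^0(7)$.

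The claimed properties of $\chi$ would then follow by restricting to a fibre of $\mathcal{P}\to Z$: such a fibre is identified with $G_2$, on which the restricted map becomes the orbit map $G_2\to Q^5$, $g\mapsto g^{-1}\cdot\xi_z$; this is a surjective submersion because $Q^5$ is a single $G_2$-orbit (cf.\ Proposition \ref{prop:e_Euclid_twist_str}(ii) and the discussion preceding it). Moreover, $\chi^{-1}(\xi)$ is exactly the set of $(z,f)$ with $f(\xi)=\xi_z$, which over each $z\in Z$ is a torsor for the isotropy group of $G_2$ at $\xi$. Hence $\chi^{-1}(\xi)\to Z$ is a principal subbundle of $\mathcal{P}$ with the required structural group.

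The main obstacle lies in the first step, namely making precise that the Ward transform of the $G_2$-frame bundle with its Levi-Civita connection is indeed $\pi^*F(M)$ equipped with its natural twistorial structure. Concretely, one has to check that the integrability established in Theorem \ref{thm:eholon_G2} translates into flatness of the pulled-back connection along the twistor submanifolds, in the exact sense demanded by the Ward correspondence for principal bundles in \cite{DesLouPan}. Once this identification is in place, the construction and verification of $\chi$ reduce to the $G_2$-equivariance of $Q^5\subseteq{\rm Gr}_3^0(7)$ already obtained in the previous section.
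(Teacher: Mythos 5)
There is a genuine gap, and it lies exactly where you yourself locate the ``main obstacle'': your identification of the Ward transform. You take $\mathcal{P}=\pi^*F(M)$ for a projection $\pi\colon Z\to M$ whose fibres are the twistor quadrics. But the twistor space here is not a bundle over $M$. The correspondence space is the bundle $\mathcal{C}\to M$ of quadrics (fibre $Q^5$, so $\dim\mathcal{C}=12$ in the $7$-dimensional case), each point $(x,p)$ of which determines the $4$-dimensional subspace $p^\perp\subseteq T_xM\otimes\C$; the twistor space $Z$ is (locally) the leaf space of the resulting $4$-dimensional foliation, so $\dim Z=8$, the twistor quadrics are the images of the fibres of $\mathcal{C}\to M$ under $\mathcal{C}\to Z$ (and they intersect one another --- this is consistent with the ``distributions transversal to the twistor submanifolds of dimension $3$'' in Theorem \ref{thm:eholon_G2}), and there is no map $Z\to M$. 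Consequently your Ward condition --- ``curvature vanishing along the fibres of $\pi$'' --- is not the right one (for a genuinely pulled-back connection it would be vacuous). The Ward transform goes through the double fibration $M\leftarrow\mathcal{C}\rightarrow Z$: one pulls $F(M)$ back to $\mathcal{C}$ and descends along the fibres of $\mathcal{C}\to Z$, and the condition required is flatness of the Levi-Civita connection along the $4$-dimensional submanifolds of $M$ corresponding to points of $Z$ (this is stated explicitly in Remark \ref{rem:converse}). That flatness is not the integrability statement of Theorem \ref{thm:eholon_G2} alone; it uses the curvature constraint $R\in U_{0,2}$, which holds automatically for holonomy in $G_2$ but must be invoked.

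Your construction of $\chi$ is the right idea, but it too must be set up on $\mathcal{C}$ rather than on a fictitious bundle $Z\to M$: over $(x,p)\in\mathcal{C}$ a point of the pulled-back frame bundle is a $G_2$-frame $f\colon\C^{\!7}\to T_xM\otimes\C$, one sets $\chi=f^{-1}(p)\in Q^5$ (well defined by the $G_2$-invariance of $Q^5\subseteq{\rm Gr}_3^0(7)$, as you say), and one must then check that $\chi$ is constant on the fibres of $\mathcal{C}\to Z$, i.e.\ that a frame which is parallel along a leaf sends the fixed $\xi\in Q^5$ to the tautological quadric point at every point of that leaf. This descent step, which is where Theorem \ref{thm:eholon_G2} and the $G_2$-reduction of the connection actually enter, is missing from your argument. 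The fibrewise analysis (the orbit map $G_2\to Q^5$ being a surjective submersion with fibre the isotropy subgroup) is correct and agrees with what the paper intends.
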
 
\begin{proof} 
This follows quickly from Theorem \ref{thm:eholon_G2} and the Ward transformation. 
\end{proof} 

\begin{rem} \label{rem:converse} 
Let $\mathcal{U}$ be the dual of the restriction to $Q^5\subseteq{\rm Gr}_3^0(7)$ of the tautological vector bundle over ${\rm Gr}_3(7)$\,. 
As the restriction of\/ $\mathcal{U}$ to any `associative' conic is isomorphic to $\ol(2)\oplus 2\ol(1)$\,, 
from \cite{Buch} and \cite{GraRem-cas}\,, we deduce that $H^1(\mathcal{U})=0=H^1({\rm End}\,\mathcal{U})$\,.\\ 
\indent 
Let $Z$ be a manifold endowed with an embeding of $Q^5$ with normal bundle (isomorphic to) $\mathcal{U}$. By \cite{Kod}\,, there exists 
a locally complete family of (nondegenerate) $5$-quadrics embedded into $Z$ and containing the given $Q^5\subseteq Z$, as a member. 
Furthermore, by applying \cite{Gri-extension_I} (see \cite{Nar-deformations}\,), we deduce that the normal bundle of each such $5$-quadric is 
$\mathcal{U}$. Denote by $M$ the parameter space of this family.\\ 
\indent 
Suppose that $\mathcal{P}$ is a principal bundle over $Z$ with structural group $G_2$ and endowed with a surjective submersion
$\chi:\mathcal{P}\to Q^5$ whose fibre, over any $\xi\in Q^5$, is the total space of a principal subbundle of $\mathcal{P}$ 
with structural group the isotropy subgroup of $G_2$ at $\xi$.\\ 
\indent 
If the restriction of $\mathcal{P}$ to any twistor $5$-quadric is trivial, we may apply the Ward transformation, thus, obtaining  
a principal bundle $P$ over $M$ with structural group $G_2$\,, and endowed with a connection $\nabla$ compatible with $Z$; that is, 
$\nabla$ restricted to the submanifolds of $M$ corresponding to the points of $Z$ gives flat connections 
(this condition can be, also, formulated, pointwisely, by using the twistorial structure determined by $Z$ on $M$).\\ 
\indent 
Moreover, the existence of $\chi$ implies that $P$ is a reduction of the frame bundle of $M$. Consequently, 
the almost twistorial structure on $M$ given by $Q^5\subseteq{\rm Gr}_4(7)$ is integrable, with respect to $\nabla$.\\ 
\indent 
Furthermore, by using \cite{Bry-G2}\,, we obtain that  
the curvature form $R$ and torsion $T$ of $\nabla$ satisfy, pointwisely, 
\begin{equation} \label{e:conditions} 
\begin{split} 
R&\in U_{0,0}\oplus U_{0,1}\oplus U_{0,2}\\ 
T&\in U_{0,0}\oplus U_{1,0}\oplus U_{0,1}\;, 
\end{split} 
\end{equation} 
where $U_{m,n}$ is the irreducible representation space of $G_2$ corresponding to $(m,n)\in\mathbb{N}^2$.  
(We, further, have that $\nabla$ and the Levi-Civita connection of $M$ are projectively equivalent if and only if 
$T\in U_{0,0}\oplus U_{1,0}$\,.)\\ 
\indent 
Conversely, if a $G_2$-manifold with torsion satisfies \eqref{e:conditions} then its associated almost twistorial structures are integrable 
and it, also, satisfies the conclusion of Corollary \ref{cor:eholon_G2}\,. For the horizontal distribution  on the associated bundle of quadrics 
to be projectable to the twistor space, one further needs the condition $R\in U_{0,2}$ (in which case, the Ricci tensor of $\nabla$ is zero). 
\end{rem} 

\indent 
Similarly to Remark \ref{rem:converse}\,, a partial converse to Theorem \ref{thm:eholon_G2} and Corollary \ref{cor:eholon_G2}\,, the latter, 
suitably, adapted to ${\rm Spin}(7)$-manifolds with holonomy group contained by $G_2\,(\subseteq{\rm Spin}(7)$\,),   
can be obtained.   

\begin{exm} \label{exm:deform} 
Let $Z$ be the projectivisation of the dual of the tautological bundle over $Q^5\subseteq{\rm Gr}_4^0(8)$\,, endowed with the 
images of the sections of the projection $\p:Z\to Q^5$. An open neighbourhood of each such quadric is the twistor space 
of a flat $G_2$-manifold.\\ 
\indent 
By using \cite{San-weak_Fano_unobst}\,, we obtain a family $(Z_t)_{t\in U}$ of compact manifolds, parametrized by an open neighbourhood 
$U$ of $0\in U_{1,0}=H^1(TZ)$\,, and such that $Z_0=Z$. Also, from \cite{Kod} we deduce that we may assume that 
each $Z_t$\,, $(t\in U)$\,, is endowed with a $7$-dimensional family of (nondegenerate) $5$-quadrics; furthermore, 
similarly to Remark \ref{rem:converse}\,, we deduce that the normal bundle of each such quadric is $\mathcal{U}$. 
Moreover, if $t\neq0$ then $Z_t$ is not diffeomorphic to $Z$ (for example, because it has no nontrivial vector fields; 
a consequence of \cite{Gri-extension_I}\,).\\ 
\indent 
For each $\xi\in Q^5$ let $\mathcal{P}^{\xi}$ be the pull-back by $\p$ of $(G_2,Q^5,H^{\xi})$\,, where $H^{\xi}$ is the isotropy subgroup 
of $G_2$ at $\xi$\,. By using \cite{Gri-extension_I}\,, we deduce that we may assume that $\mathcal{P}^{\xi}$  
extends to a principal bundle $\mathcal{P}^{\xi}_t$ over any $Z_t$\,, $t\in U$.\\ 
\indent 
Now, note that, $\mathcal{U}$ is associated to $(G_2,Q^5,H_{\xi})$\,. Thus, to any $\mathcal{P}^{\xi}_t$ we may associate a vector bundle 
$\mathcal{U}_t$ over $Z_t$\,, where, obviously, $\mathcal{U}_0=\p^*\mathcal{U}$. Moreover, by using the fact that 
$H^1({\rm End}\,\mathcal{U})=0$\,, we deduce that the obtained family $(\mathcal{U}_t)_{t\in U}$ is unique. 
Consequently, $\mathcal{P}_t=\mathcal{P}^{\xi}_t\times_{H^{\xi}}G_2$ does not depend of $\xi\in Q^5$.\\ 
\indent 
Thus, $Z_t$ is the twistor space of a $G_2$-manifold with torsion satisfying \eqref{e:conditions}\,, for any $t\in U$.  
In particular, $Z$ is the twistor space of ${\rm Spin}(7)/G_2$ endowed with the canonical connection of the reductive decomposition  
of Corollary \ref{cor:for_Cayley_cross_product}\,. 
\end{exm}


\begin{thebibliography}{10} 

\bibitem{Bry-1987} 
R.~L.~Bryant, Metrics with exceptional holonomy, 
\textit{Ann. of Math. (2)}, {\bf 126} (1987) 525--576. 

\bibitem{Bry-G2} 
R.~L.~Bryant, Some remarks on $G_2$-structures,   
\textit{Proceedings of G\"okova Geometry-Topology Conference 2005}, G\"okova Geometry/Topology Conference (GGT), G\"okova, 2006, 75--109.

\bibitem{Buch} 
N.~Buchdahl, On the relative de Rham sequence, 
\textit{Proc. Amer. Math. Soc.}, {\bf 87} (1983) 363--366.

\bibitem{DesLouPan} 
G. Deschamps, E. Loubeau, R. Pantilie, Harmonic maps and twistorial structures,
\textit{Mathematika}, {\bf 66} (2020) 112--124.  

\bibitem{GraRem-cas} 
H.~Grauert, R.~Remmert, \textit{Coherent analytic sheaves}, 
Grundlehren der Mathematischen Wissenschaften [Fundamental Principles of Mathematical Sciences], 265, 
Springer-Verlag, Berlin, 1984. 

\bibitem{Gri-extension_I} 
P.~A.~Griffiths, The extension problem for compact submanifolds of complex manifolds. I. The case of a trivial normal bundle, 
\textit{Proc. Conf. Complex Analysis (Minneapolis, 1964)}, Springer, Berlin, 1965, 113--142. 

\bibitem{Kod}
K.~Kodaira, A theorem of completeness of characteristic systems for analytic families of
compact submanifolds of complex manifolds,
\textit{Ann. of Math. (2)}, {\bf 75} (1962) 146--162. 

\bibitem{Nar-deformations} 
M.~S.~Narasimhan, Deformations of complex structures and holomorphic vector bundles, 
\textit{Complex analysis, Proc. Summer School, (Trieste, 1980)}, J.~Eells (editor), Lecture Notes in Math., Springer, 1982, 196--209.  

\bibitem{Pan-hqo} 
R.~Pantilie, Quaternionic-like manifolds and homogeneous twistor spaces, 
\textit{Proc. A.}, {\bf 472} (2016) 20160598, 11 pp. 

\bibitem{Pan-ro-q_tame} 
R.~Pantilie, On tame $\r$-quaternionic manifolds, Preprint IMAR, 2019, 
(available from \href{https://arxiv.org/abs/1901.05072}{\tt https://arxiv.org/abs/1901.05072}).  

\bibitem{Sal-holo_book} 
S.~Salamon, \textit{Riemannian geometry and holonomy groups}, 
Pitman Research Notes in Mathematics Series, 201. Longman Scientific \& Technical, Harlow; 
copublished in the United States with John Wiley \& Sons, Inc., New York, 1989, viii+201 pp. 

\bibitem{San-weak_Fano_unobst} 
T.~Sano, Unobstructedness of deformations of weak Fano manifolds, 
\textit{Int. Math. Res. Not. IMRN}, (2014) no. 18, 5124--5133.

\bibitem{SveWoo-2015} 
M.~Svensson, J.~C.~Wood, Harmonic maps into the exceptional symmetric space $G_2/{\rm SO}(4)$\,, 
\textit{J. Lond. Math. Soc. (2)}, {\bf 91} (2015) 291--319.

\end{thebibliography}
\end{document}